\numberwithin{equation}{section}
\theoremstyle{plain}
\newtheorem{theorem}{Theorem}[section]
\newtheorem{lemma}[theorem]{Lemma}
\newtheorem{proposition}[theorem]{Proposition}
\newtheorem{corollary}[theorem]{Corollary}
\theoremstyle{definition}
\newtheorem{definition}[theorem]{Definition}
\newtheorem{example}[theorem]{Example}
\DeclareMathOperator{\id}{Id}
\renewcommand{\phi}{\varphi}
\DeclareMathOperator{\con}{Con}
\DeclareMathOperator{\supp}{supp}
\newcommand{\mb}[1]{\mathbf{#1}}
\newcommand{\bb}[1]{\mathbb{#1}}
\newcommand{\mrm}[1]{\mathrm{#1}}
\newcommand{\mc}[1]{\mathcal{#1}}
\newcommand{\A}{\mb{A}}
\newcommand{\D}{\mb{D}}
\newcommand{\V}{\mb{V}}
\newcommand{\X}{\mrm{X}}
\newcommand{\Y}{\mb{Y}}
\newcommand{\0}{\mb{0}}
\newcommand{\2}{\mb{2}}
\newcommand{\3}{\mb{3}}
\newcommand{\n}{\mb{n}}
\newcommand{\bdot}{\boldsymbol{\cdot}}
\renewcommand{\emptyset}{\varnothing}
\newcommand{\meet}{\wedge}
\newcommand{\join}{\vee}
\begin{document}

\title[Every complete atomic Boolean algebra is the ideal lattice of a cBCK-algebra]{Every complete atomic Boolean algebra is the ideal lattice of a cBCK-algebra}

\author[C. M. Evans]{C. Matthew Evans}
\address{Mathematics Department\\
Washington \& Jefferson College\\Washington, PA 15301\\USA}
\urladdr{https://sites.google.com/view/mattevans}
\email{mevans@washjeff.edu}

\subjclass{06F35, 08A30, 06E99}

\keywords{BCK-algebra, ideal lattice, Boolean algebra}

\begin{abstract}
Given a complete atomic Boolean algebra, we show there is a commutative BCK-algebra whose ideal lattice is that Boolean algebra. This result is shown to exist within a larger framework involving BCK-algebras of functions, whose ideals and prime ideals are analyzed by way of a specific Galois connection. As a corollary of the main theorem, we show that every discrete topological space is the prime spectrum of a cBCK-algebra.
\end{abstract}

\maketitle

\section{Introduction}\label{sec:intro}

The class of BCK-algebras was introduced in 1966 by Imai and Is\'{e}ki \cite{II66} as the algebraic semantics for a non-classical logic having only implication. While this class of algebras is not a variety \cite{wronski83}, many subclasses are; for example, the subclass of commutative BCK-algebras forms a variety. 

As with any algebra of logic, the ideal theory plays an important role in their study. The ideals of a commutative BCK-algebra correspond to its congruence relations: for a commutative BCK-algebra $\A$, there is a lattice isomorphism $\con(\A)\cong\id(\A)$, where $\con(\A)$ is the lattice of congruences on $\A$ and $\id(\A)$ is the lattice of ideals of $\A$ \cite{yutani77}. Because of this, all results regarding ideals of commutative BCK-algebras could equivalently be stated about congruences; we will use the language of ideals. 

For any BCK-algebra $\A$, commutative or not, the lattice $\id(\A)$ is distributive \cite{palasinski81(2)}. In \cite{evans22}, the author proved that each of the following five types of distributive lattice is the ideal lattice of a commutative BCK-algebra:
\begin{enumerate}
\item any finite chain,
\item any countably infinite chain isomorphic to $\bb{Z}_{\leq 0}\cup\{-\infty\}$,
\item any finite subdirectly irreducible distributive p-algebra,
\item any finite Boolean algebra,
\item any distributive lattice $\D$ whose poset of meet-irreducibles is poset-isomorphic to the order dual $T^\partial$ of some finite rooted tree $T$.
\end{enumerate} Additionally, any finite product of lattices where each factor lies in one of the above classes is the ideal lattice of a commutative BCK-algebra. For example, every divisor lattice is the ideal lattice of a commutative BCK-algebra since it is a finite product of finite chains. We note also that $\id(\A)$ is Boolean for any finite cBCK-algebra $\A$ \cite{rt80}.

Here we extend (4) above by showing that every complete atomic Boolean algebra is the ideal lattice of a commutative BCK-algebra. We present two proofs of this fact. The first proof is presented in Section \ref{2}; it is straightforward but perhaps ad hoc, so we situate this result within a broader context in Section \ref{3}. We define an infinite family of algebras we believe to be of interest in its own right, and which contains the algebra used in the first proof. Some time is spent analyzing the ideals of these algebras, and from this analysis a second proof of the main theorem is derived. In the final section, we present an application of the main theorem: any discrete topological space is the spectrum of a commutative BCK-algebra.


\begin{definition} A \textit{commutative BCK-algebra} (\textit{cBCK-algebra}) is an algebra $\A=\langle A; \bdot, 0\rangle$ of type $(2,0)$ such that 
\begin{enumerate}
\item[]\hspace{-1cm} (BCK1)\; $(x\bdot y)\bdot z = (x\bdot z)\bdot y$
\item[]\hspace{-1cm} (BCK2)\; $x\bdot(x\bdot y)=y\bdot(y\bdot x)$
\item[]\hspace{-1cm} (BCK3)\; $x\bdot x=0$
\item[]\hspace{-1cm} (BCK4)\; $x\bdot 0=x$
\end{enumerate}
for all $x,y,z\in A$.
\end{definition}

As mentioned in the introduction, these algebras are the algebraic semantics of a non-classical logic having only implication. The translation happens by reading the product ``$x\bdot y$'' as ``$y\Rightarrow x$'' and 0 as ``true.''

On any cBCK-algebra $\A$ we define a partial order by: $x\leq y$ if and only if $x\bdot y=0$. It can be shown that $0\bdot x=0$ for all $x\in A$, so 0 is the smallest element of $\A$ with respect to $\leq$. The term operation $x\meet y:=y\bdot(y\bdot x)$ is the greatest lower bound of $x$ and $y$, and $\A$ is a semilattice with respect to $\meet$. In particular, from (BCK2) we have $x\meet y=y\meet x$, and these algebras are called \textit{commutative} for this reason. We also have $x\bdot y\leq x$ with equality if and only if $x\meet y=0$.

For proofs of these, as well as other elementary properties of cBCK-algebras, we point the reader to \cite{it76 ,it78,mj94, rt80, tanaka75, traczyk79, yutani77}. All cBCK-algebras are assumed to be non-trivial.

\begin{definition} An \textit{ideal} of a BCK-algebra $\A$ is a subset $I\subseteq A$ such that
\begin{enumerate}
\item $0\in I$
\item $x\bdot y\in I$ and $y\in I$ implies $x\in I$.
\end{enumerate}
\end{definition}

From this we see that ideals are non-empty downsets. BCK-ideals need not be semilattice-ideals: for example, $\A$ itself always satisfies the above definition, but need not be directed upward. More generally, when the underlying poset of $\A$ is a lattice, every BCK-ideal is a lattice-ideal, but the converse is not true: consider the three-element chain $\3=\{0<1<2\}$ with operation $x\bdot y=\max\{x-y,0\}$. Then $I=\{0,1\}$ is a lattice-ideal since it is a downset and closed under join, but it is not a BCK-ideal since $2\bdot 1=1\in I$ but $2\notin I$.

Denote the set of ideals of $\A$ by $\id(\A)$. This set is a complete lattice with respect to $\cap$, and is known to be distributive \cite{palasinski81(2)}. As mentioned earlier, there is a lattice-isomorphism $\id(\A)\cong \con(\A)$, where $\con(\A)$ is the lattice of congruence relations on $\A$; see \cite{AT77} or \cite{yutani77} for a proof. Of course $\{0\}$ and $\A$ are always ideals, and we will say $\A$ is \textit{simple} if these are the only ideals. 

For $S\subseteq A$, the smallest ideal of $\A$ containing $S$ is the \textit{ideal generated by $S$}, which we denote by $(S]$. The join operation in $\id(\A)$ is given by $I\join J=(I\cup J]$.

If $S=\{a\}$, we will write $(a]$ rather than $(\{a\}]$. In \cite{it76}, Is\'{e}ki and Tanaka showed that $x\in(S]$ if and only if there exist  $s_1, \ldots, s_n\in S$ such that 
\begin{equation*}\label{ideal}
\bigl(\cdots\bigl((x\bdot s_1)\bdot s_2\bigr)\bdot \cdots \bdot s_{n-1}\bigr)\bdot s_n=0\,.\tag{$\ast$}
\end{equation*}
This characterization is true in \textit{any} BCK-algebra, commutative or not.

For $n\in\bb{N}_0$, define the notation $x\bdot y^n$ recurvisely as
\begin{align*}
x\bdot y^0&=x\\
x\bdot y^n&=(x\bdot y^{n-1})\bdot y\,.
\end{align*} This gives a decreasing sequence
\[x\bdot y^0\geq x\bdot y^1\geq x\bdot y^2\geq \cdots \geq x\bdot y^n\geq\cdots\,.\] With this notation, we can describe principal ideals by
\begin{equation*}\label{principal ideal}
(a]=\{x\in\A\,\mid\, x\bdot a^n=0\text{ for some $n\in\bb{N}$}\}\,.\tag{$\ast\ast$}
\end{equation*}

If the underlying poset of a cBCK-algebra is totally ordered, we will call it a \textit{cBCK-chain}. The following statement appears as a proposition in \cite{rt82}:

\begin{quotation}
A cBCK-chain $\A$ is simple if and only if, for any $x,y\in\A$ with $y\neq 0$, there exists $n\in\bb{N}$ such that $x\bdot y^n=0$
\end{quotation}
but it is stated without proof. The present author gave a proof in \cite{evans22}, but that proof does not appear to use the fact that $\A$ is commutative or a chain. We provide the proof here:

\begin{proposition}\label{characterization of simple algebras}
A BCK-algebra $\A$ is simple if and only if, for any $x,y\in \A$ with $y\neq 0$, there is a natural number $n$ such that $x\bdot y^n=0$.
\end{proposition}

\begin{proof} Assume first that $\A$ is simple, and take $x,y\in \A$ with $y\neq 0$. Consider the ideal $(y]$. By simplicity we must have $(y]=\A$ since $y\neq 0$. But this means $x\in(y]$, in which case (\ref{principal ideal}) indicates there exists $n\in\bb{N}$ such that $x\bdot y^n=0$. 

On the other hand, assume for any pair $x,y\in\A$ with $y\neq 0$ that there exists $n\in\bb{N}$ such that $x\bdot y^n=0$. Let $I$ be a non-zero ideal of $\A$. Take $z\in \A$ and $y\neq 0$ in $I$. By hypothesis there is some $k\in\bb{N}$ such that $z\bdot y^k=0\in I$. Since $y\in I$, we apply the ideal property inductively to obtain $z\in I$. Hence, $I=\A$, which implies $\A$ is simple. 
\end{proof}

For example, the set of non-negative reals $\bb{R}^+$ is a simple cBCK-chain under the operation $x\bdot y=\max\{x-y, 0\}$. More generally, the positive cone of any Archimedean group can be viewed as a cBCK-algebra, and with this structure it is a simple cBCK-chain. The algebra $\bb{R}^+$ has many important subalgebras -- the unit interval $[0,1]$, the finite chains $\n=\{0<1<\cdots <n\}$, the natural numbers $\bb{N}_0$ -- all of which are simple. For readers familiar with the language of universal algebra: the variety of cBCK-algebras is generated by the finite chains. That is, $\mathtt{cBCK}=HSP(\2, \3, \ldots )$. However, not all BCK-chains are simple; see Example \ref{ordinal sum example} below.

On the other hand, there are many simple BCK-algebras that are not chains. For example, the algebra $\mb{Y}$ defined by Table \ref{tab:Y} is simple, but not a chain. Figure \ref{fig:Y} shows the Hasse diagram.
\begin{figure}[h]
\begin{tabular}{*{2}{>{\centering\arraybackslash}b{\dimexpr0.5\linewidth-2\tabcolsep\relax}}}
\begin{tabular}{c||cccc}
$\bdot$   & 0 & 1 & 2 & 3\\\hline\hline
0             & 0 & 0 & 0 & 0\\
1             & 1 & 0 & 0 & 0\\
2             & 2 & 1 & 0 & 1\\
3             & 3 & 1 & 1 & 0\\        
\end{tabular}
\captionof{table}{\label{tab:Y} The algebra $\Y$}
&
\begin{tikzpicture}
\filldraw (0,0) circle (2pt);
\filldraw (0,1) circle (2pt);
\filldraw (1,2) circle (2pt);
\filldraw (-1,2) circle (2pt);
\draw [-] (0,0) -- (0,1);
\draw [-] (0,1) -- (-1,2);
\draw [-] (0,1) -- (1,2);
	\node at (0,-.4) {\small 0};
	\node at (.3,.9) {\small $1$};
	\node at (-1, 2.3) {\small $2$};
	\node at (1, 2.3) {\small $3$};
\end{tikzpicture}
\caption{\label{fig:Y}}
\end{tabular}
\end{figure}


\section{The Main Theorem}\label{2}

\begin{theorem}\label{main theorem}
Every complete atomic Boolean algebra is the ideal lattice of a commutative BCK-algebra.
\end{theorem}

\begin{proof}
Let $\mb{C}$ be a complete atomic Boolean algebra. Then $\mb{C}$ is isomorphic to $\mc{P}(X)$, the power set of $X$, for some set $X$. With respect to set difference, $\mc{P}(X)$ is a cBCK-algebra with $\emptyset$ as the zero element. Put $\A=\mc{P}_{\text{fin}}(X)$, the set of finite subsets of $X$, and note that $\A$ is a subalgebra of $\mc{P}(X)$. 

The underlying poset of $\A$ is a lattice, so each BCK-ideal of $\A$ is a lattice-ideal. In this case, every lattice-ideal is also a BCK-ideal; to see this, let $I$ be a lattice-ideal of $\A$. Certainly $\emptyset\in I$. Now suppose $B\setminus C\in I$ and $C\in I$ for some $B,C\in\A$. Since $I$ is a lattice-ideal it is a downset, so $B\cap C\in I$. Therefore $(B\setminus C)\cup (B\cap C)=B\in I$, which tells us $I$ is a BCK-ideal. So BCK-ideals and lattice-ideals coincide in $\A$. 

For any $S\subseteq X$, we note that $\mc{P}_{\text{fin}}(S)$, the set of finite subsets of $S$, is a lattice-ideal (and hence a BCK-ideal) of $\A$ since $\mc{P}_{\text{fin}}(S)$ is closed under $\cup$ and closed downward with respect to $\subseteq$. Thus, we define a map $\phi\colon \mc{P}(X)\to \id(\A)$ by 
\[\phi(S)=\mc{P}_{\text{fin}}(S):=I_S\,.\] We claim this is an isomorphism of Boolean algebras. 

First, it is clear that this map is injective. To see it is surjective, take $I\in\id(\A)$. Putting $S=\bigcup_{B\in I}B$, we have $I=I_S$; this follows since every element $B\in I$ is a finite union of elements of $S$. Hence, $\phi$ is bijective.

Next we show $\phi$ is a lattice homomorphism. For $S,T\subseteq X$, we have $I_{S\cap T}=I_S\cap I_T$ just by general properties of the powerset. To show $\phi$ preserves joins, take $B\in I_{S\cup T}$. Then we can write $B$ as a finite union of elements of $S\cup T$, which can be rearranged to get $B=C\cup D$ where $C$ is a finite subset of $S$ and $D$ is a finite subset of $T$. This implies $B\in I_S\join I_T$, so $I_{S\cup T}\subseteq I_S\join I_T$. The reverse inclusion is straightforward, and so $I_{S\cup T}=I_S\join I_T$.

Given $I_S$, observe that 
\[I_S\cap I_{S^c}=I_{S\cap S^c}=I_\emptyset=\{\emptyset\}\]
and
\[I_S\join I_{S^c}=I_{S\cup S^c}=I_X=\A\,,\] meaning that $I_{S^c}$ is the complement of $I_S$ in $\id(\A)$. So $\phi$ preserves complements, meaning $\phi$ is an isomorphism of Boolean algebras, as claimed.

Thus, the theorem follows since we have $\mb{C}\cong\mc{P}(X)\cong \id(\A)$.

\end{proof}

\section{BCK-algebras of functions}\label{3}

In this section we show how the above theorem is a special case of a more general result. Let $X$ be any set and $\A$ a non-trivial cBCK-algebra. Put \[\mc{F}(X, \A)=\{f\colon X\to \A\}\,,\] the set of all functions from $X$ to $\A$. When $X$ and $\A$ are understood, we will write $\mc{F}$ instead of $\mc{F}(X,\A)$. The set $\mc{F}$ becomes a cBCK-algebra with pointwise operation $(f\bdot g)(x)=f(x)\bdot_\A g(x)$, where the zero element is the zero function $\0$. This algebra is a generalization of Example 1 from \cite{it78}, and a version also appears as Example 7 of the author's PhD thesis \cite{evans20}. We will use $\leq$ to denote the order on both $\mc{F}$ and $\A$, and it will always be clear from context. We note that the semilattice operation $\meet$ on $\mc{F}$ is also given pointwise; that is, $(f\meet g)(x)=f(x)\meet_\A g(x)$.

For $f\in\mc{F}$, define the \textit{support} of $f$ to be 
\[\supp(f)=\{\, x\in X\,\mid\, f(x)\neq 0\}=\{\, x\in X\,\mid\, f(x)> 0\}\,.\]
We say that $f$ has \textit{finite support} if the support of $f$ is a finite set. Let $\mc{FS}(X,\A)$ be the set of functions $f\colon X\to \A$ with finite support; when $X$ and $\A$ are understood we will write $\mc{FS}$.

\begin{lemma}
$\mc{FS}$ is an ideal of $\mc{F}$, and hence a subalgebra of $\mc{F}$.
\end{lemma}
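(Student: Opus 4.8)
The plan is to verify the two closure conditions that define a subalgebra of an algebra of type $(2,0)$: that $\mc{FS}$ contains the designated constant $\0$, and that $\mc{FS}$ is closed under the binary operation $\bdot$. The first is immediate, since $\supp(\0)=\emptyset$ is finite, whence $\0\in\mc{FS}$. Once both conditions are checked, $\mc{FS}$ automatically satisfies (BCK1)--(BCK4), since these identities already hold in $\mc{F}$ (as recalled in the paragraph preceding the lemma) and identities are inherited by subalgebras.

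For closure under $\bdot$, the key observation is that the pointwise operation cannot enlarge supports. Recall from the preliminaries that $0\bdot t=0$ for every element $t$ of any cBCK-algebra, in particular in $\T$ (alternatively one may invoke $s\bdot t\leq s$ together with the fact that $0$ is least). Thus if $f,g\in\mc{FS}$ and $x\notin\supp(f)$, then
\[
(f\bdot g)(x)=f(x)\bdot_\T g(x)=0\bdot_\T g(x)=0\,,
\]
so $x\notin\supp(f\bdot g)$. This gives $\supp(f\bdot g)\subseteq\supp(f)$, which is finite, and therefore $f\bdot g\in\mc{FS}$.

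Combining the two points, $\mc{FS}$ is a nonempty subset of $\mc{F}$ containing $\0$ and closed under $\bdot$, hence a cBCK-subalgebra of $\mc{F}$. I do not anticipate any genuine obstacle here: the argument is a one-line support computation, and the only thing to be careful about is that the ambient structure $\mc{F}$ is already known to be a cBCK-algebra, so that the pointwise operation is well defined and the axioms are available for free on the subalgebra.
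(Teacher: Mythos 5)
Your proof is correct and follows essentially the same route as the paper: the key point in both is that $\supp(f\bdot g)\subseteq\supp(f)$, which you establish via $0\bdot_\T t=0$ (contrapositively) while the paper uses $f(x)\geq f(x)\bdot_\T g(x)$ directly. Your explicit remarks that $\0\in\mc{FS}$ and that the BCK identities are inherited by subalgebras are harmless additions the paper leaves implicit.
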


\begin{proof}
Clearly $\0\in\mc{FS}$. Now suppose $f\notin \mc{FS}$ and $g\in\mc{FS}$. Given $x\in\supp(f)\setminus\supp(g)$, we have 
\[(f\bdot g)(x)=f(x)\bdot_\A g(x)=f(x)\bdot_\A 0=f(x)>0\] which implies that $\supp(f\bdot g)$ is infinite. Thus, $f\bdot g\notin \mc{FS}$, and we have that $\mc{FS}$ is an ideal of $\mc{F}$.

Ideals are always subalgebras, so the result follows.
\end{proof}

Consider the relation $R\subseteq \mc{F}\times X$ defined by 
\[R=\{\, (f,x)\in \mc{F}\times X\,\mid\, f(x) = 0\,\}\,.\] This relation induces a Galois connection:
\begin{itemize}\itemsep=2ex
\item for $\mc{G}\subseteq \mc{F}$, put 
	\begin{align*}
		P(\mc{G})&=\{\, x\in X\,\mid\, g(x)=0 \text{ for all $g\in \mc{G}$}\,\}\\
		&= \bigcap_{g\in\mc{G}}\bigl(X\setminus\supp(g)\bigr)
	\end{align*}
\item for $S\subseteq X$, put 
	\begin{align*}
		V(S)&=\{\, f\in\mc{F}\,\mid\, f(s)=0\text{ for all $s\in S$}\,\}\\
		&=\{\,f\in\mc{F}\,\mid\, S\subseteq X\setminus\supp(f)\,\}\,.
	\end{align*}
\end{itemize} For a singleton $\{x\}\subseteq X$, we will write $V(x)$ for $V(\{x\})$. We note this is an antitone Galois connection: if $S\subseteq T\subseteq X$, then certainly $V(T)\subseteq V(S)$ since any function vanishing on $T$ must also vanish on $S$. Similarly, $P(-)$ is order-reversing.

\begin{lemma}\label{V(S) is an ideal}
The set $V(S)$ is an ideal of $\mc{F}$ for any $S\subseteq X$.
\end{lemma}

\begin{proof}
That $\0\in V(S)$ is clear. Suppose $f\bdot g\in V(S)$ and $g\in V(S)$. If we had $f(s)> 0$ for some $s\in S$, then $(f\bdot g)(s)> 0$ since $g(s)=0$, but this contradicts the fact that $f\bdot g\in V(S)$. So we must have $f(s)=0$ for all $s\in S$, and thus $f\in V(S)$. Hence, $V(S)$ is an ideal of $\mc{F}$.
\end{proof}

\begin{lemma}\label{V is injective}
The map $V\colon \mc{P}(X)\to\id(\mc{F})$ is injective.
\end{lemma}

\begin{proof}
Let $S,T\subseteq X$ with $S\neq T$. Without loss of generality, there is some $s\in S\setminus T$. Pick some non-zero $a\in\A$, and define $f\colon X\to\A$ by $f(x)=0$ for all $x\in X\setminus\{s\}$ and $f(s)=a$. Then $f\in V(T)$ but $f\notin V(S)$, and hence $V(S)\neq V(T)$.
\end{proof}

However, $V$ need not be surjective. For example, consider \[\mc{F}(\bb{N},\2)=\2^\bb{N}\cong \mc{P}(\bb{N})\,.\] Notice that $\mc{FS}(\bb{N},\2)\cong \mc{P}_{\text{fin}}(\bb{N})$ contains the characteristic function $\chi_{\{n\}}$ for each $n\in\bb{N}$, and so $\mc{FS}\neq V(S)$ for any $S\subseteq\bb{N}$.

On the other hand, if $X$ is a finite set, say $|X|=n$, then $\mc{F}(X,\A)=\mc{FS}(X,\A)\cong \A^n$, and it is known that every ideal of $\A^n$ is a product of ideals of $\A$ (see Lemma 2.1.1 of \cite{evans20}). Thus, if $\A$ is simple, then every ideal $I$ of $\A^n$ is of the form $I=\prod_{i=1}^nE_i$ where each factor $E_i$ is either $\{0\}$ or $\A$, meaning every ideal of $\mc{F}(X,\A)=\mc{FS}(X,\A)$ is of the form $V(S)$ for some $S\subseteq X$. And in fact, we observe that $I=VP(I)$ for any ideal $I$ of $\mc{F}$. That is, ideals are the \textit{closed sets} of the Galois connection $(V,P)$ in this case. This can be extended to infinite $X$, provided we restrict to the subalgebra $\mc{FS}(X,\A)$.

\begin{theorem}\label{ideals of FS}
Let $X$ be any set and $\A$ a simple cBCK-algebra. A subset $I\subseteq \mc{FS}(X,\A)$ is an ideal of $\mc{FS}$ if and only if $I=VP(I)$. In particular, every ideal of $\mc{FS}$ has the form $V(S)$ for some $S\subseteq X$.
\end{theorem}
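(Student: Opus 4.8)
The plan is to reduce the theorem to two facts: first, that $V(S)$ is an ideal of $\mc{FS}$ for every $S\subseteq X$; and second, that $VP(I)=I$ for every ideal $I$ of $\mc{FS}$. (Here $V(S)$ is read as a subset of $\mc{FS}$, in keeping with the Galois connection being induced by a relation on $\mc{FS}\times X$.) Granting both, the theorem is immediate: if $I=VP(I)$ then $I=V(P(I))$ is an ideal by the first fact, and conversely every ideal $I$ equals $VP(I)=V(S)$ with $S=P(I)$ by the second, which also gives the final assertion.

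For the first fact I would verify the two ideal axioms directly. Clearly $\0\in V(S)$. If $f\bdot g\in V(S)$ and $g\in V(S)$, then for each $s\in S$ we get $f(s)\bdot_\T g(s)=(f\bdot g)(s)=0$, so $f(s)\leq g(s)=0$ in $\T$; since $0$ is the least element of $\T$ this forces $f(s)=0$, whence $f\in V(S)$.

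For the second fact, the inclusion $I\subseteq VP(I)$ is the routine half of the Galois connection: if $f\in I$ and $x\in P(I)$ then $f(x)=0$ by definition of $P(I)$. The content is the reverse inclusion $VP(I)\subseteq I$. Here I would take $f\in VP(I)$ --- so $f\in\mc{FS}$ and $f$ vanishes on $P(I)$ --- and enumerate $\supp(f)=\{x_1,\dots,x_n\}$ (the case $n=0$ gives $f=\0\in I$). For each $i$, since $f(x_i)\neq 0$ we have $x_i\notin P(I)$, so there is $g_i\in I$ with $g_i(x_i)\neq 0$; as $\T$ is a simple cBCK-chain, there is $m_i\in\bb{N}$ with $f(x_i)\bdot_\T (g_i(x_i))^{m_i}=0$. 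I would then form
\[h=f\bdot g_1^{m_1}\bdot g_2^{m_2}\bdot\cdots\bdot g_n^{m_n}\in\mc{FS}\]
and check pointwise that $h=\0$: for $x\notin\supp(f)$ this follows from $0\bdot y=0$, while for $x=x_j$ one uses (BCK1) to reorder the divisions so that the $m_j$ divisions by $g_j(x_j)$ are performed first, already producing $0$. Thus $h=\0\in I$, and since $g_1,\dots,g_n\in I$, the characterization $(\ast)$ of generated ideals yields $f\in(\{g_1,\dots,g_n\}]\subseteq I$.

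I expect the reverse inclusion $VP(I)\subseteq I$ to be the only real obstacle, as it requires converting the abstract hypothesis ``$f$ vanishes on $P(I)$'' into an explicit witness for $f\in I$. The two ingredients that make this work are the finiteness of $\supp(f)$ --- precisely the reason for passing from $\mc{F}$ to $\mc{FS}$ --- and the simplicity of $\T$, which lets $f$ be annihilated one coordinate at a time; (BCK1) is then what allows these coordinatewise annihilations to be combined into a single relation of the form $(\ast)$.
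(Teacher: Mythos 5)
Your proposal is correct and follows essentially the same route as the paper's proof: verify that $V(S)$ is an ideal, use the Galois connection for $I\subseteq VP(I)$, and for the reverse inclusion annihilate $f$ coordinate-by-coordinate over its finite support using simplicity of $\T$ and (BCK1), concluding via the characterization $(\ast)$ (the paper phrases this last step as repeated application of the ideal property, which is equivalent). The minor variations --- proving closure of $V(S)$ directly rather than by contradiction, and handling the empty-support case explicitly --- do not change the argument.
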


\begin{proof} First, note that the proof of Lemma \ref{V(S) is an ideal} is also true for $\mc{FS}$. Thus, if $I=VP(I)$, we see that $I$ is an ideal.

On the other hand, suppose $I$ is an ideal of $\mc{FS}$. Since $V$ and $P$ are a Galois connection, we have $I\subseteq VP(I)$. For the other inclusion, take $f\in VP(I)$ with $f\neq \0$. We know that $\supp(f)$ is finite, so enumerate the elements $\supp(f)=\{\,x_1, x_2,\ldots, x_k\,\}$. For each $x_i\in\supp(f)$, the fact that $f(x_i)\neq 0$ tells us $x_i\notin PVP(I)=P(I)$. Hence, for each $i=1,\ldots, k,$ there is an element $g_i\in I$ such that $g_i(x_i)\neq 0$. 

Since $\A$ is simple, Proposition \ref{characterization of simple algebras} tells us that for each $i$ there is $n_i\in\bb{N}$ such that \[(f\bdot g_i^{n_i})(x_i)=f(x_i)\bdot_\A \bigl(g_i(x_i)\bigr)^{n_i}=0\,.\]  Define $h:=\bigl(\cdots \bigl((f\bdot g_1^{n_1})\bdot g_2^{n_2}\bigr)\bdot \cdots \bdot g_{k-1}^{n_{k-1}}\bigr)\bdot g_k^{n_k}$. We claim that $h=\0$.

First, it may the be case that $g_i(x_j)\neq 0$ for $i\neq j$. By (BCK1) the order in which the $g_i$'s are applied does not matter, so we have
\[\bigl((f\bdot g_i^{n_i})\bdot g_j^{n_j}\bigr)(x_j)=\bigl((f\bdot g_j^{n_j})\bdot g_i^{n_i}\bigr)(x_j)\leq (f\bdot g_j^{n_j})(x_j)=0\,,\]
which implies that $h(x_j)=0$ for all $x_j\in\supp(f)$.

Next, for any $z\in X\setminus \supp(f)$ we have $f(z)=0$ which implies $h(z)=0$; so $h$ vanishes on $X\setminus\supp(f)$ as well. 

Taking these observations together, we have
\[h=\bigl(\cdots \bigl((f\bdot g_1^{n_1})\bdot g_2^{n_2}\bigr)\bdot \cdots \bdot g_{k-1}^{n_{k-1}}\bigr)\bdot g_k^{n_k}=\0\in I\,.\] Since each $g_i\in I$, we inductively apply the ideal property to see $f\in I$. Thus, $VP(I)\subseteq I$ and we have $I=VP(I)$ as claimed.
\end{proof}

And as the following corollary shows, we can only characterize ideals in this way when $\A$ is simple:

\begin{corollary}\label{closed sets}
The ideals of $\mc{FS}(X,\A)$ coincide with closed sets of the Galois connection $(V,P)$ if and only if $\A$ is simple.
\end{corollary}

\begin{proof}
If $\A$ is simple, we apply Theorem \ref{ideals of FS}.

We prove the converse by contrapositive. Suppose $\A$ is not simple. Let $J$ be a non-zero proper ideal of $\A$ and consider \[I=\{ f\in\mc{FS}\,\mid\, f(x)\in J\text{ for all $x\in X$}\}\,.\] We first show that $I$ is an ideal of $\mc{FS}$.

Clearly $\0\in I$, so suppose $f\bdot g\in I$ and $g\in I$. Take $x\in X$. Then $(f\bdot g)(x)=f(x)\bdot_\A g(x)\in J$ and $g(x)\in J$. Since $J$ is an ideal of $\A$ this implies that $f(x)\in J$, and since $x$ was arbitrary we have $f\in I$. So $I\leq \mc{FS}$.

Now fix $y\in J$ with $y\neq 0$. For each $x\in X$, define the function $f_x$ by 
$f_x(z)=
\left.\begin{cases}
y &\text{if $z=x$}\\
0 &\text{otherwise}
\end{cases}\right\}\,.$ Clearly $f_x\in I$ for all $x\in X$, and so $P(I)=\emptyset$. Hence $I\subsetneq VP(I)=\mc{FS}$. So $I$ is not a closed set.
\end{proof}

\begin{example}\label{example V}
As a small example of this, consider the algebra $\V$ defined by Table \ref{tab:V}.
\begin{figure}[h]
\begin{tabular}{*{2}{>{\centering\arraybackslash}b{\dimexpr0.5\linewidth-2\tabcolsep\relax}}}
\begin{tabular}{c||ccc}
$\bdot$   & 0 & 1 & 2 \\\hline\hline
0             & 0 & 0 & 0 \\
1             & 1 & 0 & 1 \\
2             & 2 & 2 & 0 \\     
\end{tabular}
\captionof{table}{\label{tab:V} The algebra $\V$}
&
\begin{tikzpicture}
\filldraw (0,0) circle (2pt);
\filldraw (-1,1) circle (2pt);
\filldraw (1,1) circle (2pt);
\draw [-] (0,0) -- (-1,1);
\draw [-] (0,0) -- (1,1);
	\node at (0,-.4) {\small 0};
	\node at (-1, 1.3) {\small $1$};
	\node at (1, 1.3) {\small $2$};
\end{tikzpicture}
\caption{\label{fig:V}}
\end{tabular}
\end{figure}
This algebra has two non-zero proper ideals, $J=\{0,1\}$ and $K=\{0,2\}$, which means that 
\[I=J\times\V=\{\,(0,0)\,,\,(0,1)\,,\,(0,2)\,,\,(1,0)\,,\,(1,1)\,,\,(1,2)\,\}\] is an ideal of $\V^2\cong\mc{FS}\bigl(\{1,2\}, \V\bigr)$. Concretely we see that $P(I)=\emptyset$ and $I\subsetneq \V^2 =VP(I)$.
\end{example}

The above corollary also implies the following:

\begin{corollary}\label{V is bijective}
The map $V\colon \mc{P}(X)\to\id(\mc{FS})$ is bijective if and only if $\A$ is simple.
\end{corollary}

\begin{proof}
This follows from Lemma \ref{V is injective} and Corollary \ref{closed sets}.
\end{proof}

\begin{lemma}\label{properties of V}
For \textit{any} cBCK-algebra $\A$, the map $V$ is a lattice anti-homomorphism. That is, for $Y,Z\subseteq X$ we have 
\begin{align*}
V(Y)\cap V(Z)&=V(Y\cup Z) \text{ and }\\
V(Y)\join V(Z)&=V(Y\cap Z)\,.
\end{align*}
\end{lemma}

\begin{proof}
We begin with the first equality. Since $Y,Z\subseteq Y\cup Z$ we have $V(Y\cup Z)\subseteq V(Y), V(Z)$ since $V$ is order-reversing, and hence $V(Y\cup Z)\subseteq V(Y)\cap V(Z)$. For the other inclusion, simply note that any function vanishing on both $Y$ and $Z$ necessarily vanishes on $Y\cup Z$.

For the second equality, begin by noting that $Y\cap Z\subseteq Y,Z$, and so $V(Y), V(Z)\subseteq V(Y\cap Z)$. This implies $V(Y)\join V(Z)\subseteq V(Y\cap Z)$. For the opposite inclusion, take $f\in V(Y\cap Z)$ and let $\supp(f)=\{\, x_1, x_2, \ldots, x_k\,\}\subseteq Y^c\cup Z^c$.  Set $a_i:=f(x_i)$ for each $i$, and define $g_i\colon X\to \A$ by $g_i(x_i)=a_i$ and $g(x)=0_\A$ for all other $x\in X$. Observe that $\{g_1, g_2, \ldots, g_k\}\subseteq V(Y)\cup V(Z)$. Then \[\bigl(\cdots \bigl((f\bdot g_1)\bdot g_2\bigr)\bdot \cdots \bdot g_{k-1}\bigr)\bdot g_k=\0\,,\] and since each $g_i\in V(Y)\cup V(Z)$ we have $f\in \bigl(V(Y)\cup V(Z)\bigr]=V(Y)\join V(Z)$ by \eqref{ideal}. Thus, $V(Y\cap Z)\subseteq V(Y)\join V(Z)$ and the equality follows.
\end{proof}

\begin{theorem}\label{Id(FS) is Boolean}
The map $V\colon\mc{P}(X)\to \id(\mc{FS})$ is an (anti-)isomorphism of Boolean algebras if and only if $\A$ is simple.
\end{theorem}

\begin{proof}
Assume $\A$ is simple. We know $\id(\mc{FS})$ is a bounded distributive lattice (whether $\A$ is simple or not). Let $I$ be an ideal of $\mc{FS}$. Since $\A$ is simple, Theorem \ref{ideals of FS} gives us $I=V(S)$ for some $S\subseteq X$. Then the complement of $I$ in $\id(\mc{FS})$ is the obvious natural candidate $V(S^c)$, which follows from Lemma \ref{properties of V}:
\begin{align*}
V(S)\cap V(S^c)&=V(S\cup S^c)=V(X)=\{\0\}\\
V(S)\join V(S^c)&=V(S\cap S^c)=V(\emptyset)=\mc{FS}\,.
\end{align*} Thus, $\id(\mc{FS})$ is a Boolean algebra.

By Lemma \ref{V is bijective}, since $\A$ is simple we know that $V$ is bijective, and the preceding paragraph shows that $V$ sends complements in $\mc{P}(X)$ to complements in $\id(\mc{FS})$. Lemma \ref{properties of V} shows that $V$ sends meets to joins and vice versa. So $V$ is an Boolean anti-isomorphism, but Boolean algebras are self-dual. Hence, $\id(\mc{FS})\cong \mc{P}(X)$.

Conversely, if $\A$ is not simple then $V$ is not surjective and hence not an isomorphism.
\end{proof}

This now gives us another proof of Theorem \ref{main theorem}: letting $\mb{C}$ be a complete atomic Boolean algebra, we have $\mb{C}\cong \mc{P}(X)$ for some set $X$. The cBCK-algebra $\2$ is simple and so $\id\bigl(\mc{FS}(X,\2)\bigr)\cong\mc{P}(X)$, which gives the result.

This is, of course, the same proof in a different disguise, but we now see that every $\mb{C}$ occurs as the ideal lattice of infinitely many non-isomorphic cBCK-algebras. Further, from Lemma \ref{V is injective} and Lemma \ref{properties of V}, we see that the Boolean algebra $\mc{P}(X)$ is embedded in the lattice $\id\bigl(\mc{F}(X,\A)\bigr)$. We believe these algebras $\mc{F}(X,\A)$ -- and their ideal lattices -- to be an interesting topic for future study.


\section{A small application}\label{4}

In the final section, we present a small application of this theorem by showing that any discrete topological space is the prime spectrum of a cBCK-algebra. We first remind the reader of some definitions.

\begin{definition}
Let $\mb{L}$ be a meet-semilattice. An element $m\in\mb{L}$ is \textit{meet-irreducible} if $x\meet y=m$ implies either $x=m$ or $y=m$.
\end{definition}

\begin{definition}A proper ideal $J$ of a cBCK-algebra is \textit{prime} if $x\meet y\in J$ implies $x\in J$ or $y\in J$.
\end{definition}

It is easy to see, for example, that $\{0\}$ is a prime ideal of $\A$ if and only if $0$ is meet-irreducible in $\A$. Further, Pa\l asinski proved in \cite{palasinski81} that the prime ideals of $\A$ are precisely the meet-irreducible elements of the ideal lattice $\id(\A)$. 

\begin{lemma}\label{0 prime}
If $\{0\}$ is a prime ideal, then $V(x)$ is a prime ideal of $\mc{FS}(X,\A)$ for all $x\in X$.
\end{lemma}

\begin{proof}
Take $x\in X$ and suppose $f\meet g\in V(x)$. Then $(f\meet g)(x)=f(x)\meet_\A g(x) = 0$. Since $\{0\}$ is a prime ideal of $\A$, the element 0 is meet-irreducible in $\A$, so either $f(x)=0$ or $g(x)=0$. Thus, either $f\in V(x)$ or $g\in V(x)$, and we see $V(x)$ is a prime ideal.
\end{proof}

However, $\{0\}$ being a prime ideal does not guarantee that \textit{every} prime ideal of $\mc{FS}(X,\A)$ is of the form $V(x)$. We give an example:

\begin{example}\label{ordinal sum example}
Let $\overline{\bb{N}}_0$ denote the order dual of $\bb{N}_0$, and let $\D=\bb{N}_0\oplus\overline{\bb{N}}_0$, where $\oplus$ denotes ordinal sum. As a poset, $\D$ is a chain
\[0<1<2<3<\cdots < \overline{3}<\overline{2}<\overline{1}<\overline{0}\,.\] This becomes a cBCK-algebra compatible with $<$ if we define the operation as follows: for $a,b\in \bb{N}_0$, 
\begin{align*}
a\bdot b&=\max\{a-b, 0\}\,,\\
\overline{a}\bdot\overline{b}&=\max\{b-a, 0\}\,,\\
a\bdot\overline{b}&=0\,,\\
\overline{a}\bdot b&=\overline{a+b}\,.
\end{align*}
This example seems to have first appeared in \cite{rt81}, appeared again shortly after in \cite{wronski83(2)}, and was then generalized more recently in \cite{evans20}. In any case, the algebra $\D$ is subdirectly irreducible with ideal lattice 
$\id(\D)=\bigl\{\,\{0\bigr\}<\bb{N}_0<\D\,\bigr\}$ isomorphic to the three-element chain $\3$. Then the algebra $\mc{F}\bigl(\{1,2\},\D\bigr)\cong\D^2$ has ideal lattice isomorphic to $\3^2$. The ideal $I:=\bb{N}_0\times \D$ is meet-irreducible in $\id(\D^2)$, and thus a prime ideal of $\D^2$, yet $I\neq V(1)$ and $I\neq V(2)$.

\end{example}

On the other hand, if $\{0\}$ is not a prime ideal of $\A$, then ideals of the form $V(x)$ may not be prime. Consider again the algebra $\V^2$ from Example \ref{example V}. The ideal $V(1)=\{0\}\times\V$ is not prime since $(1,0)\meet(2,0)=(0,0)\in V(1)$, but $(1,0)\notin V(1)$ and $(2,0)\notin V(1)$.

The above examples suggest the following:

\begin{proposition}\label{V(x) is prime}
Let $\A$ be simple. An ideal $J$ of $\mc{FS}(X,\A)$ is prime if and only if $J=V(x)$ for some $x\in X$.
\end{proposition}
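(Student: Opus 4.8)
The plan is to prove both directions directly, using Theorem \ref{ideals of FS} --- every ideal of $\mc{FS}$ has the form $V(S)$ for some $S\subseteq X$, and in fact $J=V(P(J))$ --- together with the pointwise meet formula $(f\meet g)(x)=f(x)\meet_\T g(x)$ recorded just before the statement.

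For the ``if'' direction I would first note that $V(x)$ is an ideal by Theorem \ref{ideals of FS}, and that it is proper: fixing any $t\in\T$ with $t>0$ (such $t$ exists since $\T$ is nontrivial), the finitely supported function sending $x$ to $t$ and every other point to $0$ lies in $\mc{FS}\setminus V(x)$. To see $V(x)$ is prime, suppose $f\meet g\in V(x)$; evaluating at $x$ gives $f(x)\meet_\T g(x)=0$, and since $\T$ is a chain, $f(x)$ and $g(x)$ are comparable, so their meet (the smaller of the two) vanishes only if $f(x)=0$ or $g(x)=0$, i.e.\ $f\in V(x)$ or $g\in V(x)$.

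For the ``only if'' direction, let $J$ be a prime ideal and write $J=V(S)$ with $S=P(J)$ by Theorem \ref{ideals of FS}. Since $J$ is proper and $V(\emptyset)=\mc{FS}$, we have $S\neq\emptyset$. The crux is to show $S$ is a singleton. Suppose instead that $x,y\in S$ with $x\neq y$. Fixing $t>0$ in $\T$ as above, let $f$ be supported only at $x$ with $f(x)=t$, and let $g$ be supported only at $y$ with $g(y)=t$; both lie in $\mc{FS}$. Computing pointwise, $f\meet g$ vanishes everywhere --- at $x$ because $g(x)=0$, at $y$ because $f(y)=0$, and elsewhere trivially --- so $f\meet g=\0\in J$. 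But $f(x)=t\neq 0$ with $x\in S$ shows $f\notin V(S)=J$, and likewise $g\notin J$, contradicting primeness of $J$. Hence $S=\{x\}$ for some $x\in X$, and $J=V(x)$.

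I expect the only real content to be the two ``bump'' functions in the last paragraph: the observation that a separating set $S$ with two distinct points supports a pair of nonzero functions whose meet is $\0$ is exactly what turns $|S|\geq 2$ into a violation of primeness. This is where the chain hypothesis on $\T$ (and the finite-support setting) genuinely get used; everything else is bookkeeping with the Galois connection $(P,V)$ already in place.
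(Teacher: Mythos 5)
Your proposal is correct and follows essentially the same route as the paper: the forward direction via the pointwise meet formula and the chain property of $\T$, and the converse by writing $J=V(S)$, ruling out $S=\emptyset$ by properness, and killing $|S|\geq 2$ with two functions whose meet lies in $J$ while neither does (your single-point bump functions are just a slightly more special choice than the paper's, and you additionally make the properness of $V(x)$ explicit).
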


\begin{proof}
Since $\A$ is simple, 0 must be meet-irreducible and hence all ideals of the form $V(x)$ are prime ideal by Lemma \ref{0 prime}.

Conversely, suppose $J$ is a prime ideal. Since $J$ is an ideal, we know from Theorem \ref{ideals of FS} that $J=V(S)$ for some subset $S\subseteq X$. If $S=\emptyset$ then $V(S)=\mc{FS}$, a contradiction since prime ideals are proper. If $|S|\geq 2$, take $x,y\in S$ with $x\neq y$. Let $f,g\in\mc{FS}$ be such that $f(x)\neq 0$ but $f(s)=0$ for all $s\in S\setminus \{x\}$, while $g(y)\neq 0$ but $g(s)=0$ for all $s\in S\setminus \{y\}$. Then $(f\meet g)(s)=0$ for all $s\in S$, so $f\meet g\in V(S)$. Yet $f,g\notin V(S)$, meaning $J=V(S)$ is not prime, a contradiction. Hence, we must have $|S|=1$, meaning $J$ is of the form $V(x)$ for some $x\in X$.
\end{proof}

Let $\X(\A)$ denote the set of prime ideals of $\A$. For $S\subseteq\A$, define \[\sigma(S)=\{P\in\X(\A)\mid S\not\subseteq P\}\,.\] The collection $\mc{T}(\A)=\{\sigma(I)\mid I\in\id(\A)\}$ is a topology on $\X(\A)$. There are several proofs of this in the literature, but we point the reader to the paper \cite{ADT93}, in which it is also shown that $\mc{T}(\A)\cong \id(\A)$ as lattices. 

\begin{definition} The space $\bigl(\X(\A)\,,\, \mc{T}(\A)\bigr)$ is the \textit{(prime) spectrum} of $\A$.\end{definition}

Given a topological space $Y$, let $\mc{T}_Y$ denote the lattice of open subsets. We say $Y$ is a \textit{spectral space} if it is homeomorphic to the prime spectrum of a commutative ring. Hochster \cite{hochster69} proved that a space $Y$ is a spectral space if and only if $Y$ is compact, sober, and the compact open subsets form a basis that is a sublattice of $\mc{T}_Y$. A \textit{generalized spectral space} is a space satisfying the latter two of these conditions. Thus, a generalized spectral space which is compact is simply a spectral space. 

Meng and Jun \cite{MJ98} proved that the spectrum of a bounded cBCK-algebra is a spectral space. The present author showed in \cite{evans22} that the spectrum of any cBCK-algebra is a locally compact generalized spectral space, with compactness if and only if the algebra is finitely generated as an ideal. 

In general, given a generalized spectral space $Y$, it is difficult to say whether or not that space is the spectrum of a cBCK-algebra. But it is known that $Y\simeq \X(\A)$ for some cBCK-algebra $\A$ if and only if there is a lattice isomorphism $\mc{T}_Y \cong\id(\A)$; see \cite{evans22} for a proof. 

\begin{proposition}
Let $Y$ be a discrete topological space. Then $Y$ is the spectrum of a cBCK-algebra.
\end{proposition}

\begin{proof}
Since $Y$ is a discrete space, the lattice of open subsets is $\mc{P}(Y)$. By Theorem \ref{Id(FS) is Boolean} we have $\mc{T}_Y\cong \mc{P}(Y)\cong \id\bigl(\mc{FS}(Y,\A)\bigr)$ for any simple cBCK-algebra $\A$, and hence $Y\simeq \X\bigl(\mc{FS}(Y,\A)\bigr)$ for any simple cBCK-algebra $\A$ by the preceding paragraph.
\end{proof}

\end{document}